\author{Andrew Gainer-Dewar \\ Carleton College \\ \url{againerdewar@carleton.edu}}
\title{Enumeration of labeled and unlabeled $k$-gonal and polygonal $2$-trees and succulents by vertices}
\newcommand*{\gcname}[1]{\ensuremath{\mathfrak{#1}}}
\newcommand*{\specname}[1]{\ensuremath{\mathcal{#1}}}
\newcommand*{\symgp}[1]{\ensuremath{\mathfrak{S}_{#1}}}
\newcommand*{\specsucc}{\ensuremath{\gcname{s}}}
\newcommand*{\specpt}{\ensuremath{\gcname{a}_{\mathrm{p}}}}
\newcommand*{\specpto}[1][]{\ensuremath{\gcname{a}_{\mathrm{p,O}}^{#1}}}
\newcommand*{\speckt}[1]{\ensuremath{\gcname{a}_{2, {#1}}}}
\newcommand*{\speckto}[2][]{\ensuremath{\gcname{a}_{2,{#2},O}^{#1}}}
\newcommand*{\specset}{\ensuremath{\specname{E}}}
\newcommand*{\specsing}[1]{\ensuremath{#1}}
\newcommand*{\speclin}[1][]{\ensuremath{\specname{L}_{#1}}}
\newcommand*{\speccyc}[1][]{\ensuremath{\specname{C}_{#1}}}
\newcommand*{\cycgraph}[1]{\ensuremath{C_{#1}}}
\newcommand*{\pointed}[2][]{\ensuremath{{#2}^{\bullet {#1}}}}
\newcommand*{\ci}[2][]{ \ensuremath{ Z_{#2}^{#1} } }
\newcommand*{\gci}[3][]{ \ci[{#2} {#1}]{#3} }
\DeclarePairedDelimiter{\pbrac}{(}{)}
\begin{document}
\maketitle

\begin{abstract}
  We use the theory of $\Gamma$-species to enumerate $k$-gonal and polygonal $2$-trees with respect to their vertices.
  We then extend this result to enumerate ``succulents'', a tree-like class of graphs which generalize cacti.
\end{abstract}

\section{Introduction}
\label{sec:intro}
A \emph{$2$-tree} is a connected simple graph obtained by beginning with an edge graph and then iteratively adding a vertex and connecting it by edges to the endpoints of an existing edge.
The result is, effectively, a tree-like assembly of triangles ``glued together'' along their edges.
It is natural to extend to the case where these components may be polygons of more than three sides.
The result is a \emph{$k$-gonal $2$-tree} if all of the pieces are $k$-gons and a \emph{polygonal $2$-tree} if the pieces may be polygons of any number of sides.
(The classical $2$-trees may be recovered from this definition as $3$-gonal $2$-trees.)
See \cref{fig:ptree} for an example of a generic polygonal $2$-tree.

\begin{figure}[htb]
  \centering
  \begin{tikzpicture}[node distance=1.5cm]
    \node[style=cnode] (a) at (0, 0) {$a$};
    \node[style=cnode] (h) [right of=a] {$h$};
    \node[style=cnode] (d) [above left of=a] {$d$};
    \node[style=cnode] (i) [below left of=d] {$i$};
    \node[style=cnode] (b) [above left of=i] {$b$};
    \node[style=cnode] (g) [above left of=d] {$g$};
    \node[style=cnode] (c) [above right of=d] {$c$};
    \node[style=cnode] (f) [right of=c] {$f$};
    \node[style=cnode] (e) [above of=f] {$e$};

    \path [style=edge]
    (a) edge (c)
    (c) edge (f)
    (f) edge (h)
    (h) edge (a)
    (f) edge (e)
    (e) edge (c)
    (c) edge (d)
    (d) edge (a)
    (c) edge (g)
    (g) edge (b)
    (b) edge (i)
    (i) edge (a)
    ;
  \end{tikzpicture}
  \caption{A (vertex-labeled) polygonal $2$-tree with cycles of lengths $3$, $4$, and $5$}
  \label{fig:ptree}
\end{figure}
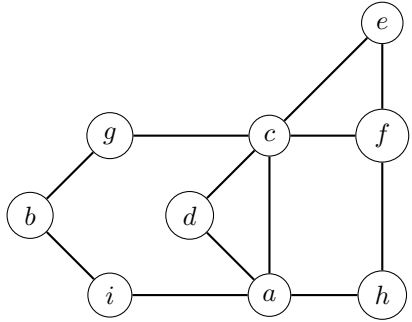

There is an extensive enumerative literature on $2$-trees.
They were first enumerated in the labeled case by Palmer \cite{palmer} and in the unlabeled case four years later by Harary and Palmer \cite{hpge}.
$k$-gonal $2$-trees were first considered by Harary, Palmer, and Read in \cite{hprk2t}, in which they consider only the case that at most two polygons may share an edge (yielding graphs which are `outerplanar').
This work was sharpened by Ducharme, Labelle, Lamanthe, and Leroux in \cite{dlll}, using the theory of combinatorial species.
Labelle, Lamanthe, and Leroux also enumerated labeled and unlabeled $k$-gonal $2$-trees (without the outerplanarity restriction) in \cite{lll}, extending previous work on (classical or $3$-gonal) $2$-trees by Fowler, Gessel, Labelle, and Leroux in \cite{spec2trees}.
General polygonal $2$-trees have received less attention.

In \cref{sec:pt}, we consider polygonal $2$-trees from a perspective informed by the theory of $\Gamma$-species, introduced by Henderson in \cite{hend} and developed in the author's previous work \cite{argthesis,agdktrees} as an enumerative tool in order to count $k$-trees, which generalize $2$-trees by gluing $\pbrac{k+1}$-simplices along their $k$-sub-simplices.
A notion of orientation is introduced, and several rooted variants of these `coherently-oriented' polygonal $2$-trees are characterized by a system of functional relationships in \cref{eq:ptorootestar,eq:ptoroote,eq:ptorootp,eq:ptorootpe}.
The orientations are removed by taking a quotient under a group action, and the roots are removed through a `dissymmetry theorem' expressed in \cref{eq:dissymalt}.
This allows computation of the numbers of labeled and unlabeled polygonal $2$-trees with $n$ vertices, shown in \cref{tab:ptunrooted}, with code given in \cref{sec:code}.

The method used is thematically similar to that employed in \cite{lll}, but offers two advantages.
First, by employing the more powerful algebraic and structural tools associated with the theory of $\Gamma$-species, it avoids entanglement in subtleties which would make application of the earlier methods to polygonal $2$-trees very complicated.
Second, it allows enumeration with respect to number of vertices, while the earlier work enumerates with respect to number of edges.
This distinction is not important in the case of $k$-gonal $2$-trees, because the edge and vertex counts are coupled.
However, in general polygonal $2$-trees, this is not the case; for example, for a fixed number $n \geq 4$ of vertices, the $n$-gon is a polygonal $2$-tree with $n$ vertices and $n$ edges, while a classical $2$-tree with $n$ vertices has $2n-3$ edges.

In \cref{sec:kt}, we show that small modifications to the work in \cref{sec:pt} allows for enumeration of $k$-gonal $2$-trees for fixed $k$.

Finally, \cref{sec:succ}, we apply the results of \cref{sec:pt} to enumerate the \emph{succulents} defined by Wilkes in \cite{vcuts}, by showing that the succulents are exactly the connected graphs whose blocks are polygonal $2$-trees and then applying the theory of block decompositions.

See \cref{fig:succ} for an example of a succulent graph.

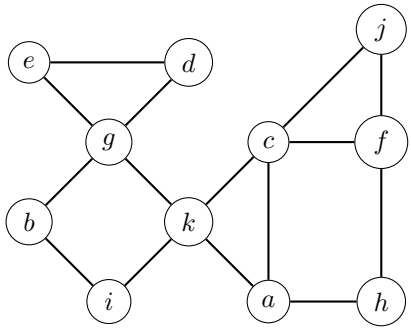
\begin{figure}[htb]
  \centering
  \begin{tikzpicture}[node distance=1.5cm]
    \node[style=cnode] (a) at (0, 0) {$a$};
    \node[style=cnode] (h) [right of=a] {$h$};
    \node[style=cnode] (k) [above left of=a] {$k$};
    \node[style=cnode] (c) [above right of=k] {$c$};
    \node[style=cnode] (f) [right of=c] {$f$};
    \node[style=cnode] (j) [above of=f] {$j$};
    \node[style=cnode] (i) [below left of=d] {$i$};
    \node[style=cnode] (g) [above left of=d] {$g$};
    \node[style=cnode] (b) [below left of=g] {$b$};
    \node[style=cnode] (e) [above left of=g] {$e$};
    \node[style=cnode] (d) [above right of=g] {$d$};

    \path [style=edge] (a) -- (h) -- (f) -- (j) -- (c) -- (f)
    (c) -- (a) -- (k) -- (c)
    (k) -- (g) -- (e) -- (d) -- (g)
    (g) -- (b) -- (i) -- (k);
  \end{tikzpicture}
  \caption{A (vertex-labeled) succulent}
  \label{fig:succ}
\end{figure}

\section{Polygonal $2$-trees}
\label{sec:pt}
We first investigate the species $\specpt$ of polygonal $2$-trees.
We adopt the following definition:
\begin{definition}
  Any cycle graph $\cycgraph{k}$ is a polygonal $2$-tree, and any graph obtained from a polygonal $2$-tree $T$ by adding a path of any length $k \geq 1$, selecting an edge $e$ of $T$, and connecting the end vertices of the path to the two ends of $e$ by edges so as to induce a cycle of length $k+2$ is a polygonal $2$-tree.
\end{definition}

Effectively, polygonal $2$-trees are ``trees made of polygons''; see \cref{fig:ptree} for an illustration.

The tree-like structure of polygonal $2$-trees suggests that we may find some recursive functional equations for their cycle index series.
Before we can do this, however, we must add additional structure.

\begin{definition}
  \label{def:orientation}
  An \emph{orientation} of a polygonal $2$-tree is an assignment of a direction to each of its edges.
  An orientation of a polygonal $2$-tree is \emph{coherent} if there is a directed path around each of its cycles.
\end{definition}

See \cref{fig:coptree} for an illustration of a coherent orientation of the polygonal $2$-tree from \cref{fig:ptree}.

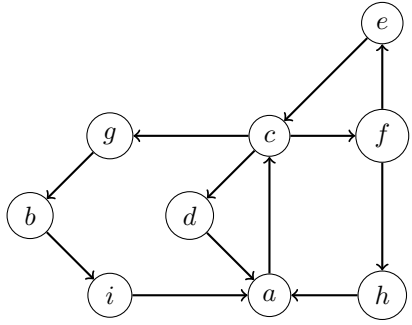
\begin{figure}[htb]
  \centering
  \begin{tikzpicture}[node distance=1.5cm]
    \node[style=cnode] (a) at (0, 0) {$a$};
    \node[style=cnode] (h) [right of=a] {$h$};
    \node[style=cnode] (d) [above left of=a] {$d$};
    \node[style=cnode] (i) [below left of=d] {$i$};
    \node[style=cnode] (b) [above left of=i] {$b$};
    \node[style=cnode] (g) [above left of=d] {$g$};
    \node[style=cnode] (c) [above right of=d] {$c$};
    \node[style=cnode] (f) [right of=c] {$f$};
    \node[style=cnode] (e) [above of=f] {$e$};

    \path [style=diredge]
    (a) edge (c)
    (c) edge (f)
    (f) edge (h)
    (h) edge (a)
    (f) edge (e)
    (e) edge (c)
    (c) edge (d)
    (d) edge (a)
    (c) edge (g)
    (g) edge (b)
    (b) edge (i)
    (i) edge (a)
    ;
  \end{tikzpicture}
  \caption{A (vertex-labeled) coherently-oriented polygonal $2$-tree}
  \label{fig:coptree}
\end{figure}

We note that any polygonal $2$-tree admits exactly two coherent orientations.
The class $\specpto$ of coherently-oriented polygonal $2$-trees then admits a natural action of the two-element group $\symgp{2}$ in which the trivial element $e$ acts trivially and the non-trivial element $\tau$ acts by reversing the direction of every edge.
This action commutes with permutations of labels, so $\specpto$ is a $\symgp{2}$-species with respect to this action in the sense of \cite{hend}.

\begin{theorem}
  \label{thm:ptquotient}
  The species $\specpt$ of unrooted, unoriented polygonal $2$-trees and the $\symgp{2}$-species $\specpto$ of unrooted, coherently-oriented polygonal $2$-trees satisfy the species isomorphism
  \begin{equation}
    \label{eq:ptquotient}
    \specpt = \quotient{\specpto}{\symgp{2}}
  \end{equation}
  in the sense of \cite{argthesis} (indicating that each polygonal $2$-tree may naturally be identified with an orbit of coherently-oriented polygonal $2$-trees under the action of $\symgp{2}$).
\end{theorem}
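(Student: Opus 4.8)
The plan is to exhibit the isomorphism directly, by constructing the natural ``forget the orientation'' transformation and showing that its fibers coincide with the $\symgp{2}$-orbits. For each finite label set $U$, let $\Phi_U$ denote the map from the set of coherently-oriented polygonal $2$-trees on $U$ to the set of polygonal $2$-trees on $U$ that discards the direction on each edge, retaining only the underlying unoriented graph. First I would observe that $\Phi_U$ is surjective: by the remark preceding the theorem, every polygonal $2$-tree on $U$ admits (exactly) two coherent orientations, so each lies in the image of $\Phi_U$.

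Next I would identify the fibers of $\Phi_U$ with the orbits of the $\symgp{2}$-action. The substantive point is that reversing every edge of a coherent orientation again produces a coherent orientation --- a directed walk around each cycle simply becomes a directed walk traversing that cycle in the opposite sense --- so $\tau$ carries a coherent orientation of a fixed unoriented tree to another coherent orientation of the same tree. Because every polygonal $2$-tree contains at least one edge, reversal strictly alters the orientation, so $\tau$ has no fixed points and the two coherent orientations of each unoriented tree form a single $\symgp{2}$-orbit of size two. Together with the ``exactly two'' count, this shows that two coherently-oriented polygonal $2$-trees have the same image under $\Phi_U$ if and only if they lie in the same $\symgp{2}$-orbit. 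Hence $\Phi_U$ descends to a bijection $\overline{\Phi}_U$ from the quotient set $\pbrac{\quotient{\specpto}{\symgp{2}}}[U]$ onto $\specpt[U]$.

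Finally I would check naturality, so that the family $\{\overline{\Phi}_U\}$ constitutes an isomorphism of species rather than merely a bijection for each $U$. Since relabeling acts only on the vertices while forgetting the orientation touches only the edge directions, $\Phi_U$ commutes with transport of structure along any bijection $U \to V$; and since the $\symgp{2}$-action was already observed to commute with label permutations, the quotient inherits a well-defined transport of structure with respect to which $\overline{\Phi}$ is natural. This establishes the species isomorphism $\specpt = \quotient{\specpto}{\symgp{2}}$.

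The step I expect to demand the most care is the fiber analysis. One must confirm both that reversal sends coherent orientations to coherent orientations and that the two coherent orientations guaranteed by the preliminary remark are genuinely exchanged by $\tau$ (rather than, say, coinciding). The absence of fixed points of $\tau$, assured by the presence of at least one edge, is precisely what makes each $\symgp{2}$-orbit a two-element set equal to a fiber of $\Phi_U$, giving the clean correspondence between orbits and unoriented trees.
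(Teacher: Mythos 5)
Your proposal is correct and follows essentially the same route as the paper, which states this theorem without a formal proof and treats it as an immediate consequence of the preceding observations that every polygonal $2$-tree admits exactly two coherent orientations and that the orientation-reversing $\symgp{2}$-action commutes with relabeling. Your write-up simply makes explicit the forgetful map, the identification of its fibers with the two-element orbits, and the naturality check that the paper leaves implicit.
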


Thus, if we can compute the $\symgp{2}$-cycle index $\gci{\symgp{2}}{\specpto}$ for coherently-oriented polygonal $2$-trees, we may pass to the ordinary cycle index $\ci{\specpt}$ for polygonal $2$-trees.

\subsection{Rooted coherently-oriented polygonal $2$-trees}
\label{sec:ptoroot}
Let $\specpto[e \star]$ denote the $\symgp{2}$-species of coherently-oriented polygonal $2$-trees rooted at an unlabeled edge whose end vertices are unlabeled, $\specpto[e]$ denote the $\symgp{2}$-species of coherently-oriented polygonal $2$-trees rooted at an edge, $\specpto[p]$ denote the $\symgp{2}$-species of coherently-oriented polygonal $2$-trees rooted at a polygon, and $\specpto[pe]$ denote the $\symgp{2}$-species of coherently-oriented polygonal $2$-trees rooted at a polygon with a distinguished edge.
(We consider these to be $3$-sort $\symgp{2}$-species, where vertices are of sort $\specsing{X}$, edges are of sort $\specsing{Y}$, and polygons are of sort $\specsing{Z}$.)

Further, let $\speclin$ denote the $\symgp{2}$-species of linear orders and $\speccyc$ the $\symgp{2}$-species of cyclic orders, in each case with the $\symgp{2}$-action whose nontrivial element reverses order.
(In each case, we will use a numeric subscript `$n$' (e.g.~$\speclin[2]$) to denote the restriction to terms of cardinality $n$ and an inequality subscript `$\geq n$' (e.g.~$\speccyc[\geq 4]$) to denote the restriction to terms of cardinality no less than $n$.)
Additionally, let $\specset$ denote the $\symgp{2}$-species of sets with the trivial $\symgp{2}$-action.

\begin{theorem}
  \label{thm:ptoroot}
  The $3$-sort $\symgp{2}$-species $\specpto[e \star]$, $\specpto[e]$, $\specpto[p]$, and $\specpto[pe]$ of variously-rooted coherently-oriented polygonal $2$-trees satisfy the functional equations
  \begin{subequations}
    \label{eq:ptoroot}
    \begin{align}
      \specpto[e \star] \pbrac{\specsing{X}, \specsing{Y}, \specsing{Z}} &= \specset \circ \pbrac[\bigg]{\specsing{Z} \cdot \sum_{n \geq 1} \pbrac*{\speclin[n] \pbrac{X} \cdot \speclin[n+1] \pbrac{\specsing{Y} \cdot \specpto[e \star]}}}
      \label{eq:ptorootestar} \\
      \specpto[e] \pbrac{\specsing{X}, \specsing{Y}, \specsing{Z}} &= \specsing{Y} \cdot \speclin[2] \pbrac{\specsing{X}} \cdot \pbrac*{\specpto[e \star] - 1}
      \label{eq:ptoroote} \\
      \specpto[p] \pbrac{\specsing{X}, \specsing{Y}, \specsing{Z}} &= \specsing{Z} \cdot \speccyc[\geq 3] \pbrac*{\specsing{X} \cdot \specsing{Y} \cdot \specpto[e \star]}
      \label{eq:ptorootp} \\
      \specpto[pe] \pbrac{\specsing{X}, \specsing{Y}, \specsing{Z}} &= \speclin[2] \pbrac*{\specsing{X}} \cdot \specsing{Y} \cdot \specsing{Z} \cdot \specpto[e \star] \cdot \sum_{n \geq 1} \pbrac*{\speclin[n] \pbrac{X} \cdot \speclin[n+1] \pbrac{\specsing{Y} \cdot \specpto[e \star]}}
      \label{eq:ptorootpe}
    \end{align}
  \end{subequations}
  as isomorphisms of $3$-sort $\symgp{2}$-species.
\end{theorem}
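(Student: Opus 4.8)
The plan is to prove each equation by exhibiting a structure-preserving bijection between the rooted coherently-oriented polygonal $2$-trees on the left and the composite species on the right, and then to verify that each bijection is $\symgp{2}$-equivariant and natural in the labels, so that it is an isomorphism of $3$-sort $\symgp{2}$-species and not merely of underlying species. The central object is $\specpto[e\star]$, from which the other three equations follow by local analysis around the appropriate root; I would establish \cref{eq:ptorootestar} first and treat the rest as consequences of that decomposition.

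For \cref{eq:ptorootestar}, I would fix a coherently-oriented polygonal $2$-tree rooted at a directed edge $e$ with unlabeled endpoints and decompose it according to the polygons incident to $e$. Because polygons are glued only along edges, these incident polygons meet pairwise only in $e$ itself and carry no natural ordering among themselves, so they assemble into a set, giving the outer $\specset \circ (-)$. Each incident polygon contributes a factor of $\specsing{Z}$; deleting $e$ from an $(n+2)$-gon through $e$ leaves a path whose coherence forces a directed traversal from one endpoint of $e$ to the other, inducing a canonical linear order on its $n$ interior vertices (the factor $\speclin[n](X)$) and on its $n+1$ edges (the factor $\speclin[n+1](\specsing{Y} \cdot \specpto[e\star])$), where each path edge simultaneously serves as the root of a strictly smaller coherently-oriented polygonal $2$-tree, producing the recursive occurrence of $\specpto[e\star]$. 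The restriction $n \geq 1$ records that the smallest polygon is a triangle, and the recursion is well-founded because each recursive factor sits on a proper sub-tree.

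With \cref{eq:ptorootestar} in hand, the remaining equations follow by adjusting the treatment of the root. For \cref{eq:ptoroote}, rooting at a genuine (labeled) edge contributes $\specsing{Y}$ for the edge and $\speclin[2](X)$ for its two endpoints, linearly ordered by the orientation, while $(\specpto[e\star] - 1)$ records the set of incident polygons, with the constant term $1$ subtracted because a polygonal $2$-tree must contain at least one polygon (an isolated edge is not one). For \cref{eq:ptorootp}, rooting at a polygon of length $m \geq 3$ presents its vertices and edges in a single cyclic order fixed by the coherent orientation, each cyclic slot carrying a vertex $\specsing{X}$, an edge $\specsing{Y}$, and the sub-tree $\specpto[e\star]$ rooted at that edge, yielding $\specsing{Z} \cdot \speccyc[\geq 3](X \cdot Y \cdot \specpto[e\star])$. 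Finally, \cref{eq:ptorootpe} combines the two: the distinguished edge contributes $\speclin[2](X) \cdot \specsing{Y}$ and carries the set $\specpto[e\star]$ of its other incident polygons, while the distinguished polygon $\specsing{Z}$ contributes its complementary path $\sum_{n \geq 1} \speclin[n](X) \cdot \speclin[n+1](\specsing{Y} \cdot \specpto[e\star])$ exactly as in the $e\star$ decomposition.

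The main obstacle is verifying $\symgp{2}$-equivariance throughout, since the theorem asserts isomorphisms of $\symgp{2}$-species rather than of ordinary species. I must check that the generator $\tau$, which reverses every edge direction, acts on each right-hand side precisely by reversing every linear and cyclic order (hence the order-reversing actions carried by $\speclin$ and $\speccyc$) while fixing the set structure and the trivially-acted sorts, and that this agrees with the edge-reversal action on the left. Concretely, I would confirm that each bijection commutes both with arbitrary relabelings of the three sorts and with $\tau$. The delicate point is the interaction of $\tau$ with the recursion: reversing a path's orientation reverses its linear orders of vertices and edges and \emph{simultaneously} reverses each sub-tree rooted along it, and I would verify that these reversals are compatible, so that the fixed-point structure under $\tau$ is transported faithfully across the isomorphism.
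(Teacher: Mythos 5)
Your proposal is correct and follows essentially the same decomposition as the paper: the root edge carries a set of polygon-sheets, each sheet contributing a $\specsing{Z}$-label and an order-reversible pair of linear orders on its $n$ path vertices and $n+1$ path edges with recursive $\specpto[e \star]$-structures, and the other three equations are obtained by the same local analysis at the root. Your added attention to checking $\symgp{2}$-equivariance of each bijection is a point the paper handles more briefly but does not change the argument.
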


\begin{proof}
  We consider each of \cref{eq:ptorootestar,eq:ptoroote,eq:ptorootp,eq:ptorootpe} in turn, demonstrating equality by exhibiting an explicit isomorphism between the left- and right-hand sides.

  Consider an $\specpto[e \star]$-structure---that is, a coherently-oriented polygonal $2$-tree rooted at an unlabeled edge whose end vertices are unlabeled.
  (See \cref{fig:ptorootestar} for an example.)
  Attached to the root edge are several ``sheets''\footnote{We adopt the term ``sheets'' as it is used in \cite{spec2trees}.}--- that is, coherently-oriented polygonal $2$-trees rooted at a polygon which is missing an edge.
  These sheets may be freely interchanged without affecting the $\specpto[e \star]$-structure and are invariant under the action of $\symgp{2}$, so they form an $\specset$-ensemble.
  Each sheet has a $\specsing{Z}$-label at its root polygon and a linearly-ordered sequence of descendant structures.
  Specifically, there are $n$ $\specsing{X}$-structures and $n+1$ $\specsing{Y}$-structures for some $n \geq 1$; these $\specsing{Y}$-structures are in turn the roots of descendant $\specpto[e \star]$-structures.
  No permutation of the $\specsing{X}$- or $\specsing{Y}$-structures fixes the overall sheet structure, and the action of the nontrivial element of $\symgp{2}$ reverses the overall order, so these descendants taken together form an $\pbrac*{\speclin[n] \pbrac{\specsing{X}} \cdot \speclin[n+1] \pbrac{\specsing{Y} \cdot \specpto[e \star]}}$-structure.
  Since $n$ may take any value $\geq 1$, \cref{eq:ptorootestar} follows.

  \begin{figure}[htb]
    \centering
    \begin{tikzpicture}[node distance=2.5cm]
      \node[style=cnode] (a) at (0, 0) {$\ast$};
      \node[style=cnode] (h) [right of=a] {$\specsing{X}$};
      \node[style=cnode] (d) [above left of=a] {$\specsing{X}$};
      \node[style=cnode] (i) [below left of=d] {$\specsing{X}$};
      \node[style=cnode] (b) [above left of=i] {$\specsing{X}$};
      \node[style=cnode] (g) [above left of=d] {$\specsing{X}$};
      \node[style=cnode] (c) [above right of=d] {$\ast$};
      \node[style=cnode] (f) [right of=c] {$\specsing{X}$};
      \node[style=cnode] (e) [above of=f] {$\specsing{X}$};

      \node[style=dnode] (A) at (barycentric cs:a=1/2,f=1/2) {$\specsing{Z}$};
      \node[style=dnode] (B) at (barycentric cs:c=1/4,f=1/2,e=1/4) {$\specsing{Z}$};
      \node[style=dnode] (C) at (barycentric cs:a=1/4,c=1/4,d=1/2) {$\specsing{Z}$};
      \node[style=dnode] (D) at (barycentric cs:d=3/5,b=2/5) {$\specsing{Z}$};

      \draw [style={dirrootedge}]
      (a) -- node [midway,style=snode] {$\ast$} (c);
      
      \path [style=diredge]
      (c) edge node [midway,style=snode] {$\specsing{Y}$} (f)
      (f) edge node [midway,style=snode] {$\specsing{Y}$} (h)
      (h) edge node [midway,style=snode] {$\specsing{Y}$} (a)
      (f) edge node [midway,style=snode] {$\specsing{Y}$} (e)
      (e) edge node [midway,style=snode] {$\specsing{Y}$} (c)
      (c) edge node [midway,style=snode] {$\specsing{Y}$} (d)
      (d) edge node [midway,style=snode] {$\specsing{Y}$} (a)
      (c) edge node [midway,style=snode] {$\specsing{Y}$} (g)
      (g) edge node [midway,style=snode] {$\specsing{Y}$} (b)
      (b) edge node [midway,style=snode] {$\specsing{Y}$} (i)
      (i) edge node [midway,style=snode] {$\specsing{Y}$} (a)
      ;
    \end{tikzpicture}
    \caption{An example $\specpto[e \star]$-structure, with the sorts indicated by $\specsing{X}$, $\specsing{Y}$, and $\specsing{Z}$ and unlabeled root components indicated by $\ast$}
    \label{fig:ptorootestar}
  \end{figure}

  Now consider an $\specpto[e]$-structure---that is, a coherently-oriented polygonal $2$-tree rooted at an edge.
  The edge has a $\specsing{Y}$-label, and each endpoint of that edge has an $\specsing{X}$-label; under the action of the nontrivial element of $\symgp{2}$, these two $\specsing{X}$-labels are interchanged, so they form an $\speclin[2] \pbrac{\specsing{X}}$-structure.
  What remains is a nonempty $\specpto[e \star]$-structure (since a single edge is not a polygonal $2$-tree).
  \Cref{eq:ptoroote} follows.

  Now consider an $\specpto[p]$-structure---that is, a coherently-oriented polygonal $2$-tree rooted at a polygon.
  The root polygon has a $\specsing{Z}$-label, and it is surrounded by at least three edges, each of which has an $\specsing{X}$-label on its target end, a $\specsing{Y}$-label on its edge, and a descendant $\specpto[e \star]$-structure.
  These descendants are cyclically ordered, and the action of the nontrivial element of $\symgp{2}$ reverses this ordering.
  \Cref{eq:ptorootp} follows.

  Finally, consider a $\specpto[pe]$-structure---that is, a coherently-oriented polygonal $2$-tree rooted at a polygon with a distinguished edge.
  Here, too, the root polygon has a $\specsing{Z}$-label, which is again surrounded by edges.
  Additionally, the distinguished edge has a $\specsing{Y}$-label, its endpoints have two $\specsing{X}$-labels which are interchanged when orientation is reversed, and the other polygons which may be attached there form a $\specpto[e \star]$-structure, contributing the term $\speclin[2] \pbrac*{\specsing{X}} \cdot \specsing{Y} \cdot \specpto[e \star]$.
  Finally, as in the case of $\specpto[e \star]$, the edges around the distinguished polygon together carry an $\pbrac*{\speclin[n] \pbrac{\specsing{X}} \cdot \speclin[n+1] \pbrac{\specsing{Y} \cdot \specpto[e \star]}}$-structure for some $n \geq 1$.
  \Cref{eq:ptorootpe} follows.
\end{proof}

\Cref{eq:ptorootestar,eq:ptoroote,eq:ptorootp,eq:ptorootpe} correspond to a recursive system of functional equations for the $\symgp{2}$-cycle indices $\gci{\symgp{2}}{\specpto[e]}$, $\gci{\symgp{2}}{\specpto[p]}$, and $\gci{\symgp{2}}{\specpto[pe]}$, which can be solved using algebraic and computational methods.

\subsection{Dissymmetry and generic coherently-oriented polygonal $2$-trees}
The results of \cref{sec:ptoroot} allow for the enumeration of coherently-oriented polygonal $2$-trees rooted at edges, polygons, or polygons with distinguished edges.
To pass to the unrooted case, we will use a `dissymmetry theorem', which connects these four species together.

We first define some preliminary notions.
The \emph{degree} of a given polygon $P$ in a polygonal $2$-tree $T$ is the number of edges of $p$ which are also edges of other polygons in $T$.
A polygon of degree $1$ is a \emph{leaf polygon}.
Note that any polygonal $2$-tree with at least two polygons has a leaf polygon.

Given a (coherenly-oriented or unoriented) polygonal $2$-tree $T$, its \emph{pruning} is the polygonal $2$-tree $T'$ that results when every leaf polygon of $T$ is removed (but the edges which attached them are left).
By the previous, if $T$ has at least two polygons, then its pruning has strictly fewer polygons, so iteratively pruning any polygonal $2$-tree will ultimately yield either a single edge or a single polygon.
This edge or polygon is the \emph{center} of $T$.
Note that the center is \emph{canonical} (i.e.~invariant under isomorphisms) and is independent of orientation.

\begin{theorem}[Dissymmetry theorem for coherently-oriented polygonal $2$-trees]
  \label{thm:dissym}
  The $\symgp{2}$-species $\specpto$, $\specpto[e]$, $\specpto[p]$, and $\specpto[pe]$ of unrooted and variously-rooted coherently-oriented polygonal $2$-trees satisfy the species isomorphism
  \begin{equation}
    \label{eq:dissym}
    \specpto + \specpto[pe] = \specpto[e] + \specpto[p].
  \end{equation}
\end{theorem}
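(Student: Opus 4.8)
The plan is to prove \cref{eq:dissym} by the standard ``pointing toward the center'' technique for dissymmetry theorems (cf.\ \cite{spec2trees}), adapted to the polygon--edge incidence structure of a polygonal $2$-tree. Everything rests on the canonical center constructed above, which is either an edge or a polygon and is independent of both labeling and orientation. First I would record two elementary decompositions. Since the center is canonical, an unrooted structure is determined by, and determines, the same structure rooted at its own center; partitioning according to whether that center is an edge or a polygon gives a natural $\symgp{2}$-species isomorphism expressing $\specpto$ as the sum of the edge-rooted structures whose root edge is the center and the polygon-rooted structures whose root polygon is the center. Trivially, $\specpto[e]$ splits as those structures whose root edge is the center plus those whose root edge is not, and likewise for $\specpto[p]$. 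Writing $B_{e}$ and $B_{p}$ for the subspecies of $\specpto[e]$ and $\specpto[p]$ in which the root is \emph{not} the center, substituting these decompositions into \cref{eq:dissym} and cancelling the two ``center-rooted'' summands common to both sides reduces the theorem to the single claim
\[
  \specpto[pe] = B_{e} + B_{p}.
\]

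To establish this claim I would construct a bijection by pointing each non-center root toward the center. It is convenient to work with the bipartite incidence tree of a polygonal $2$-tree, whose nodes are its polygons and its edges and whose bonds record polygon--edge incidence; because an edge may be shared by arbitrarily many polygons, this bipartite tree---rather than any polygon-adjacency graph---is the correct carrier of the tree structure, and its canonical center node recovers the center defined above by pruning. Given a $B_{e}$-structure, rooted at a non-center edge $f$, there is a unique first bond on the path from $f$ to the center, joining $f$ to a well-defined incident polygon $P$; I would send it to the $\specpto[pe]$-structure rooted at $P$ with distinguished edge $f$. Symmetrically, a $B_{p}$-structure rooted at a non-center polygon $Q$ determines a unique edge $e$ of $Q$ pointing toward the center, and I would send it to the $\specpto[pe]$-structure rooted at $Q$ with distinguished edge $e$. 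For the inverse, given any $\specpto[pe]$-structure rooted at a polygon $P$ with distinguished edge $f$, the bond $\{P,f\}$ separates the incidence tree into two components; the canonical center lies in exactly one of them, and which one dictates whether the structure came from $B_{p}$ (center on the $f$-side, so $f$ is the step from $P$ toward the center) or from $B_{e}$ (center on the $P$-side, so $P$ is the step from $f$ toward the center). This case analysis shows the two maps have disjoint images that together exhaust $\specpto[pe]$, so their sum is a bijection.

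Finally I would check that this bijection is an isomorphism of $\symgp{2}$-species, that is, that it is natural in the label sets of all three sorts and commutes with the order-reversing action of $\tau$. Both properties follow because the construction refers only to the canonical center and to unique paths in the incidence tree, and these are preserved by relabeling and unaffected by reversing all edge directions (the center being orientation-independent). The main obstacle I anticipate is purely in the bookkeeping of the incidence tree: one must verify that ``the first step toward the center'' is genuinely well defined and single-valued even when many polygons meet along one edge, and that the node-type of the center (edge versus polygon) is matched correctly to the two summands. Once the incidence tree and its center are set up carefully, the pointing bijection and its equivariance are routine, and composing it with the two center-based decompositions yields \cref{eq:dissym}.
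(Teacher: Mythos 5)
Your proof is correct and is essentially the paper's own argument: the paper likewise sends a center-rooted $\specpto[e]$- or $\specpto[p]$-structure to $\specpto$ by forgetting the root, and sends a non-center-rooted one to $\specpto[pe]$ by pairing the root with the adjacent polygon or edge on the path toward the canonical center. Your explicit splitting into $B_e + B_p$ and the bipartite incidence-tree bookkeeping are just a more formal packaging of the same bijection, and your equivariance/canonicity remarks match the paper's.
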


\begin{proof}
  We will exhibit an explicit map from the right-hand side to the left-hand side.
  Let $T$ be a structure of the right-hand species $\specpto[e] + \specpto[p]$.
  Then $T$ is a coherently-oriented polygonal $2$-tree rooted at either an edge or a polygon.
  That root may be the center of $T$; in this case, we send $T$ to the $\specpto$-structure obtained by forgetting the root.
  
  Otherwise, consider the sub-tree $T'$ obtained by repeatedly pruning $T$ until its root is a leaf polygon (in the case that the root is a polygon) or the root is an edge of only one polygon (if the root is an edge).
  If the root is a polygon, let $R$ be the set of that root and the edge which attaches it to $T'$; if the root is an edge, let $R$ be the set of that edge and its polygon in $T'$.
  In either case, $R$ is a polygon with a distinguished edge.
  We then identify $T$ with the $\specpto[pe]$-structure which is the underlying tree of $T$ rooted at $R$.

  This map is clearly canonical and independent of orientation.
  Additionally, it is easily seen to be a bijection by constructing the inverse map (which sends each $\specpto$-structure to the same tree rooted at its center and sends each $\specpto[pe]$-structure to the same tree rooted at whichever component of the original root was ``away from'' the center).
  Thus, the map described is an isomorphism.
\end{proof}

\begin{corollary}
  \label{thm:dissymalt}
  The $\symgp{2}$-species $\specpto$ of unrooted coherently-oriented polygonal $2$-trees is characterized by the equation
  \begin{equation}
    \label{eq:dissymalt}
    \specpto = \specpto[e] + \specpto[p] - \specpto[pe].
  \end{equation}
\end{corollary}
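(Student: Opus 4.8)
The plan is to obtain \cref{eq:dissymalt} as an immediate formal consequence of the dissymmetry theorem (\cref{thm:dissym}). That theorem supplies the genuine species isomorphism $\specpto + \specpto[pe] = \specpto[e] + \specpto[p]$, in which every term is an honest (non-virtual) $\symgp{2}$-species and $+$ denotes disjoint union. To isolate the unrooted species, I would simply transpose the term $\specpto[pe]$ across the equality, yielding $\specpto = \specpto[e] + \specpto[p] - \specpto[pe]$ as asserted.

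The single point requiring care is the meaning of the subtraction. Disjoint union furnishes $\symgp{2}$-species with the structure of a commutative semiring (together with the product), but not with additive inverses, so one cannot literally ``remove'' $\specpto[pe]$ from a species in which it does not visibly appear. The standard resolution is to read \cref{eq:dissymalt} as an identity of \emph{virtual} $\symgp{2}$-species, that is, in the Grothendieck ring obtained by formally adjoining additive inverses to the semiring of $\symgp{2}$-species under disjoint union and product, as developed in \cite{argthesis}. In that ring the equality of \cref{eq:dissym} may be rearranged term by term. Equivalently—and this is the form we use computationally—I would apply the $\symgp{2}$-cycle-index functor, whose values already lie in a genuine ring, to \cref{eq:dissym}; since that functor is additive and multiplicative, it carries the isomorphism to a ring identity that rearranges at once to give $\gci{\symgp{2}}{\specpto} = \gci{\symgp{2}}{\specpto[e]} + \gci{\symgp{2}}{\specpto[p]} - \gci{\symgp{2}}{\specpto[pe]}$.

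I expect no substantive obstacle, since all of the combinatorial content is already carried by \cref{thm:dissym} and the corollary is purely formal. The only thing to verify is that the operations involved—disjoint union, its formal inverse, and passage to $\symgp{2}$-cycle indices—are all $\symgp{2}$-equivariant and additive, which holds by construction of the $\Gamma$-species framework of \cite{hend,argthesis}. Hence \cref{eq:dissymalt} holds exactly as stated.
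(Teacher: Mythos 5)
Your proposal is correct and matches the paper's treatment: the corollary is obtained by transposing $\specpto[pe]$ in \cref{eq:dissym}, and the paper likewise defers the interpretation of the subtraction, noting that for enumerative purposes the identity need only be read at the $\symgp{2}$-cycle-index level (with the virtual-species reading available via \cite{argthesis}). No gap.
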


For enumerative purposes, we are interested in \cref{eq:dissymalt} only at the $\symgp{2}$-cycle-index level, so we need not concern ourselves here with the details of the interpretation of the subtraction.
(The interested reader can find more details in \cite{argthesis}.)

\subsection{Unoriented polygonal $2$-trees}
Per \cref{eq:ptquotient}, the cycle index of the species $\specpt$ of polygonal $2$-trees is the quotient of the $\symgp{2}$-cycle index of the species $\specpto$ of coherently-oriented polygonal $2$-trees.
This completes the enumeration of polygonal $2$-trees.

The exact numbers of labeled and unlabeled polygonal $2$-trees are given in \cref{tab:ptunrooted}.

\section{$k$-gonal $2$-trees}
\label{sec:kt}
We now investigate the species $\speckt{k}$ of $k$-gonal $2$-trees (for $k \geq 3$).
We treat $k$-gonal $2$-trees as a special case of polygonal $2$-trees, so the definitions of \cref{sec:pt} may be adopted without modification.

We first present a version of \cref{thm:ptoroot} modified for the $k$-gonal case.
\begin{theorem}
  \label{thm:ktoroot}
  The $3$-sort $\symgp{2}$-species $\speckto[e \star]{k}$, $\speckto[e]{k}$, $\speckto[p]{k}$, and $\speckto[pe]{k}$ of variously-rooted coherently-oriented $k$-gonal $2$-trees satisfy the functional equations
  \begin{subequations}
    \label{eq:ptoroot}
    \begin{align}
      \speckto[e \star]{k} \pbrac{\specsing{X}, \specsing{Y}, \specsing{Z}} &= \specset \circ \pbrac[\bigg]{\specsing{Z} \cdot \speclin[k-2] \pbrac{X} \cdot \speclin[k-1] \pbrac*{\specsing{Y} \cdot \speckto[e \star]{k}}}
      \label{eq:ktorootestar} \\
      \speckto[e]{k} \pbrac{\specsing{X}, \specsing{Y}, \specsing{Z}} &= \specsing{Y} \cdot \speclin[2] \pbrac{\specsing{X}} \cdot \pbrac*{\speckto[e \star]{k} - 1}
      \label{eq:ktoroote} \\
      \speckto[p]{k} \pbrac{\specsing{X}, \specsing{Y}, \specsing{Z}} &= \specsing{Z} \cdot \speccyc[k] \pbrac*{\specsing{X} \cdot \specsing{Y} \cdot \speckto[e \star]{k}}
      \label{eq:ktorootp} \\
      \speckto[pe]{k} \pbrac{\specsing{X}, \specsing{Y}, \specsing{Z}} &= \speclin[2] \pbrac*{\specsing{X}} \cdot \specsing{Y} \cdot \specsing{Z} \cdot \speckto[e \star]{k} \cdot \sum_{n \geq 1} \pbrac*{\speclin[n] \pbrac{\specsing{X}} \cdot \speclin[n+1] \pbrac{\specsing{Y} \cdot \speckto[e \star]{k}}}
      \label{eq:ktorootpe}
    \end{align}
  \end{subequations}
  as isomorphisms of $3$-sort $\symgp{2}$-species.
\end{theorem}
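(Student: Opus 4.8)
The plan is to reprise the proof of \cref{thm:ptoroot} almost verbatim, constructing for each of the four equations an explicit $\symgp{2}$-equivariant bijection between its two sides, and then to record the single structural feature that distinguishes the $k$-gonal setting: every polygon now has exactly $k$ vertices and $k$ edges. Consequently the size parameters that ranged freely in the polygonal case---the summation index $n \geq 1$ in \cref{eq:ptorootestar} and the lower bound $\geq 3$ on the cyclic order in \cref{eq:ptorootp}---become pinned to values determined by $k$.

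First I would treat \cref{eq:ktorootestar}. Just as before, an $\speckto[e \star]{k}$-structure is an $\specset$-ensemble of ``sheets'' attached to the unlabeled root edge, each sheet being a coherently-oriented polygon with its root edge removed. The one change is that each sheet is now a $k$-gon, so it has precisely $k-1$ surviving edges and $k-2$ interior vertices: the interior $\specsing{X}$-vertices form an $\speclin[k-2]\pbrac{\specsing{X}}$-structure, and the $\specsing{Y}$-edges, each rooting a descendant $\speckto[e \star]{k}$-structure, form an $\speclin[k-1]\pbrac{\specsing{Y} \cdot \speckto[e \star]{k}}$-structure, with $\tau$ reversing both orders. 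This is exactly the single term $n = k-2$ of the polygonal sum.

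Next I would dispatch \cref{eq:ktoroote} and \cref{eq:ktorootp}, which require only cosmetic changes. The edge-rooted argument is identical to the polygonal one and yields $\specsing{Y} \cdot \speclin[2]\pbrac{\specsing{X}} \cdot \pbrac*{\speckto[e \star]{k} - 1}$. For the polygon-rooted case the root is a single $k$-gon carrying a $\specsing{Z}$-label, and its $k$ boundary edges---each with a target $\specsing{X}$-vertex, a $\specsing{Y}$-label, and a descendant $\speckto[e \star]{k}$-structure---are cyclically ordered and reversed by $\tau$; since there are exactly $k$ of them, $\speccyc[\geq 3]$ is replaced by $\speccyc[k]$.

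The step I expect to demand the most care is \cref{eq:ktorootpe}. Following the polygonal decomposition, the distinguished edge contributes $\speclin[2]\pbrac{\specsing{X}} \cdot \specsing{Y} \cdot \speckto[e \star]{k}$ (its two endpoints, its $\specsing{Y}$-label, and the sheets hanging off it away from the root polygon), the root $k$-gon contributes its $\specsing{Z}$-label, and the remaining boundary edges of that $k$-gon carry the same linear-order data as a sheet. Because the root polygon is a $k$-gon, those remaining edges form a structure of a single fixed size ($k-2$ interior vertices and $k-1$ edges), so the main thing to pin down is that the displayed summation over $n \geq 1$ collapses to its $n = k-2$ term, alongside the $\symgp{2}$-equivariance check that $\tau$ reverses each linear and cyclic order above. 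Once these fixed sizes are confirmed, the inverse maps are assembled exactly as in \cref{thm:ptoroot}, and all four isomorphisms follow.
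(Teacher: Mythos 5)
Your proposal is correct and matches the paper's proof, which likewise reduces everything to the argument for \cref{thm:ptoroot} with the polygon size pinned to $k$ (so the free sum collapses to its $n = k-2$ term and $\speccyc[\geq 3]$ becomes $\speccyc[k]$). You are also right to flag the summation in \cref{eq:ktorootpe}: the paper's own proof text confirms it should be replaced by the single $n = k-2$ term, even though the displayed equation retains the sum.
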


\begin{proof}
  Proof is identical to that of \cref{thm:ptoroot}, with two modifications.
  First, in a $\speckto[e \star]{k}$-structure, each page attached to the root edge is a $k$-gon, so it carries $k - 2$ vertices and $k - 1$ edges, so the $\sum_{n \geq 1}$ term in \cref{eq:ptorootestar} is replaced with its $n = k-2$ term in constructing \cref{eq:ktorootestar}.
  Second, in a $\speckto[p]{k}$-structure or a $\specpto[pe]{k}$-structure, there are exactly $k$ edges on the root polygon, so the $\speccyc[\geq 3]$ term of \cref{eq:ptorootp} is replaced by a $\speccyc[k]$ term in \cref{eq:ktorootp} and the $\sum_{n \geq 1}$ term in \cref{eq:ptorootpe} is replaced with its $n = k-2$ term in \cref{eq:ktorootpe}.
\end{proof}

We next note that \cref{eq:ptquotient,thm:dissym} still apply:
\begin{corollary}
  \label{thm:dissymaltk}
  The $\symgp{2}$-species $\speckto{k}$ of unrooted coherently-oriented $k$-gonal $2$-trees is characterized by the equation
  \begin{equation}
    \label{eq:dissymaltk}
    \speckto{k} = \speckto[e]{k} + \speckto[p]{k} - \speckto[pe]{k}.
  \end{equation}
\end{corollary}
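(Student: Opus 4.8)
The plan is to obtain \cref{eq:dissymaltk} as an immediate consequence of a dissymmetry theorem for $k$-gonal $2$-trees that is the exact analogue of \cref{thm:dissym}. That is, I would first establish the $\symgp{2}$-species isomorphism
\[
  \speckto{k} + \speckto[pe]{k} = \speckto[e]{k} + \speckto[p]{k},
\]
and then rearrange it into the stated form, exactly as \cref{thm:dissymalt} was deduced from \cref{thm:dissym}.

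To establish this auxiliary isomorphism, I would observe that the proof of \cref{thm:dissym} transfers essentially verbatim to the $k$-gonal setting. The crucial point is that every notion used in that proof---the degree of a polygon, leaf polygons, pruning, and the center---is defined purely in terms of the incidence structure of polygons and shared edges, and none of the relevant operations alters the size of any polygon. In particular, pruning a $k$-gonal $2$-tree removes only leaf $k$-gons and leaves a smaller $k$-gonal $2$-tree, so iterated pruning terminates at a center that is either a single edge or a single $k$-gon. The explicit map of \cref{thm:dissym}, which sends each $\specpto[e]$- or $\specpto[p]$-structure either to the corresponding unrooted structure (when the root is the center) or to a $\specpto[pe]$-structure (otherwise), therefore restricts to a canonical, orientation-independent bijection between the four $k$-gonal species, with inverse restricting likewise.

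The main point requiring care is confirming that this restriction is well-defined on the nose: I would need to check that because every polygon created or retained by the rooting, pruning, and center-finding operations is a $k$-gon whenever the input tree is $k$-gonal, every intermediate and output structure lands in the appropriate $k$-gonal species and none ``leaks'' into the strictly larger polygonal setting. This is the only genuine obstacle, and it is a routine consequence of the size-preserving nature of the construction rather than a substantive difficulty. Once the auxiliary isomorphism is in hand, \cref{eq:dissymaltk} follows by subtracting $\speckto[pe]{k}$ from both sides; as in \cref{thm:dissymalt}, this subtraction is to be interpreted at the $\symgp{2}$-cycle-index level, where it is unproblematic and suffices for our enumerative purposes.
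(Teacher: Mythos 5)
Your proposal is correct and matches the paper's approach: the paper simply notes that \cref{thm:dissym} (and its proof via pruning and centers) still applies verbatim to the $k$-gonal case, since $k$-gonal $2$-trees are a special case of polygonal $2$-trees and none of the constructions alters polygon sizes. Your additional care about the restriction being well-defined is a reasonable elaboration of the same observation the paper leaves implicit.
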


\begin{theorem}
  \label{thm:ktquot}
  The species $\speckt{k}$ of unrooted, unoriented $k$-gonal $2$-trees and the $\symgp{2}$-species $\speckto{k}$ of unrooted, coherently-oriented $k$-gonal $2$-trees are related by the species isomorphism
  \begin{equation}
    \label{eq:ktquotient}
    \speckt{k} = \quotient{\speckto{k}}{\symgp{2}}.
  \end{equation}
\end{theorem}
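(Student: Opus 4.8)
The plan is to mirror the argument behind \cref{thm:ptquotient}, exploiting the fact that the $k$-gonal $2$-trees sit inside the polygonal $2$-trees as a subclass preserved by both the $\symgp{2}$-action and the relabeling action. First I would observe that a polygonal $2$-tree is $k$-gonal precisely when every one of its polygons is a $k$-gon, and that this condition is invariant under isomorphism, under permutation of the underlying labels, and under the orientation-reversing action of the nontrivial element $\tau \in \symgp{2}$ (which changes the directions of edges but not the sizes of cycles). Consequently $\speckto{k}$ is a full sub-$\symgp{2}$-species of $\specpto$ that is closed under the group action, so every $\symgp{2}$-orbit of $\specpto$-structures lies either entirely inside $\speckto{k}$ or entirely outside it. The quotient $\quotient{\speckto{k}}{\symgp{2}}$ is then simply the restriction of $\quotient{\specpto}{\symgp{2}} = \specpt$ (see \cref{eq:ptquotient}) to those orbits all of whose representatives are $k$-gonal, and this restriction is exactly $\speckt{k}$.

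The substance of the matter, which I would spell out directly, is a lemma asserting that every $k$-gonal $2$-tree admits exactly two coherent orientations, and that these are reverses of one another. This follows by induction on the number of polygons, using the recursive construction: a single $k$-gon $\cycgraph{k}$ has precisely two coherent (directed-cycle) orientations, and when a new $k$-gon is attached along an existing edge $e$, the already-chosen direction of $e$ forces a unique coherent orientation of the newly added path, so no additional freedom is introduced at any stage. Hence the set of coherent orientations of a fixed $k$-gonal $2$-tree has exactly two elements.

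Then I would check that $\symgp{2}$ acts freely and transitively on this two-element set: $\tau$ carries a coherent orientation to a coherent orientation (a reversed directed cycle is still a directed cycle), it is an involution, and it cannot fix any orientation since every $k$-gonal $2$-tree has at least one edge whose direction is genuinely reversed. Thus the two coherent orientations form a single free $\symgp{2}$-orbit. Forgetting orientation therefore identifies each orbit of $\speckto{k}$-structures with a unique $\speckt{k}$-structure, and conversely each $k$-gonal $2$-tree lifts to exactly one such orbit; this correspondence is canonical and commutes with relabeling, so it is an isomorphism of species, which is precisely \cref{eq:ktquotient}.

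The main obstacle---really the only point requiring genuine care---is the freeness of the action, i.e.\ verifying that no coherent orientation is its own reversal. This is where one must use that a $k$-gonal $2$-tree always has at least one edge (indeed at least $k \geq 3$ of them), guaranteeing that orientation reversal is a nontrivial fixed-point-free involution, so that the orbit has size exactly $\abs{\symgp{2}} = 2$ rather than collapsing. Everything else is a routine transcription of the polygonal case, since the only structural change from \cref{thm:ptquotient} is the restriction of polygon sizes, which plays no role in the orbit-counting argument.
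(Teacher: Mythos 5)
Your proof is correct and follows essentially the same route as the paper, which states \cref{thm:ktquot} without a separate argument on the grounds that the quotient relationship of \cref{thm:ptquotient} applies verbatim to the sub-$\symgp{2}$-species $\speckto{k}$ of $\specpto$. Your explicit verification that every $k$-gonal $2$-tree admits exactly two coherent orientations forming a single fixed-point-free $\symgp{2}$-orbit merely spells out what the paper leaves implicit.
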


This completes the enumeration of $k$-gonal $2$-trees.

\section{Succulents}
\label{sec:succ}
We now investigate the species $\specsucc$ of succulent graphs.
We quote the definition of a succulent for convenience:
\begin{definition}[{\cite[Def.~8.1]{vcuts}}]
  \label{def:succulent}
  A \emph{succulent} is a connected graph built up from cycles (including possibly $2$-cycles, consisting of two vertices joined by a double edge) in the
  following manner.
  Two cycles may be joined together either at a single vertex or along a single edge.
\end{definition}

Recall that a \emph{block} of a graph $G$ is a maximal $2$-connect subgraph of $G$.
Any connected graph $G$ admits a canonical ``block decomposition'' into its block subgraphs, with the caveat that this ``decomposition'' is \emph{not} a partition of its vertex set; any \emph{cut vertex}\footnote{A \emph{cut vertex} of a connected graph is a vertex whose removal disconnects the graph.} of $G$ will necessarily lie in more than one block.
It is a classical result of structural graph theory that the block decomposition of a graph has a tree-like structure in the sense that there can be no ``loops of blocks''.
In light of the power of combinatorial species to study tree-like structures, this makes block decompositions a powerful way to enumerate classes of graphs.

In light of this, we can reformulate our description of a succulent graph, reframing \cref{def:succulent} in terms of blocks.
A succulent is assembled by starting with a cycle or edge graph and then iteratively attaching a cycle or edge graph to either a vertex or an edge of the existing graph.
This construction can create no `cycles of cycles', so any attachment at a single vertex induces a cut-vertex.
Thus, the two-connected succulents are exactly those which are constructed by using only edgewise attachments.
As a result, the blocks of succulents are exactly the polygonal $2$-trees of \cref{sec:pt}.

\begin{theorem}
  \label{thm:succblock}
  The succulents are exactly the connected graphs whose blocks\footnote{The \emph{blocks} of a graph are its maximal $2$-connected subgraphs.} are polygonal $2$-trees.
\end{theorem}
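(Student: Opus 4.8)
The plan is to prove the two inclusions separately, in each case exploiting the observation that the recursive definition of a polygonal $2$-tree is \emph{precisely} the ``attach a cycle along an edge'' operation of \cref{def:succulent}, while the gluing of distinct blocks corresponds to the ``attach a cycle at a single vertex'' operation. Throughout I would rely on two structural facts. First, every polygonal $2$-tree is $2$-connected: this is an easy induction on the defining recursion, since a single cycle is $2$-connected and adjoining a path between the two endpoints of an existing edge cannot create a cut vertex. Consequently each polygonal $2$-tree lies entirely within a single block. Second, I would use the classical tree-like structure of block decompositions noted in the surrounding text.

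For the forward inclusion (every succulent has polygonal-$2$-tree blocks), I would induct on the number of attachment steps in a construction of the succulent $G$. The base case is a single cycle, which is itself a polygonal $2$-tree and a single block. For the inductive step, suppose $G$ arises from a succulent $H$ (with all blocks polygonal $2$-trees) by attaching a cycle $C$. If $C$ is attached at a single vertex $v$, then $v$ is a cut vertex of $G$, since deleting it separates the interior of $C$ from the rest of $H$; thus $C$ forms a new block, every other block is unchanged, and all blocks remain polygonal $2$-trees. If instead $C$ is attached along an edge $e$, then $e$ lies in a unique block $B$ of $H$, which is a polygonal $2$-tree, and identifying an edge of $C$ with $e$ is exactly the operation of adjoining a path between the endpoints of $e$ to induce a new cycle. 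By the recursive definition this enlarges $B$ to a polygonal $2$-tree, and $2$-connectivity guarantees the attachment remains confined to $B$.

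For the reverse inclusion (every connected graph $G$ all of whose blocks are polygonal $2$-trees is a succulent), I would induct on the number of blocks via a leaf of the block tree. If $G$ is a single block, it is one polygonal $2$-tree, and replaying its defining recursion in order realizes it by starting from a cycle and repeatedly attaching cycles along edges, which is a legal succulent construction. If $G$ has more than one block, choose a leaf block $B$ of the block tree, meeting the rest of $G$ in a single cut vertex $v$. Deleting the vertices of $B$ other than $v$ yields a connected graph $G'$ with one fewer block, each still a polygonal $2$-tree, hence a succulent by the inductive hypothesis; then $G$ is recovered from $G'$ by attaching a cycle of $B$ through $v$ at the vertex $v$ and building out the remainder of $B$ by edgewise attachments, all succulent operations.

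The main obstacle I anticipate is not any single deep step but the careful reconciliation of the succulent building operations with the block decomposition: namely, establishing that vertexwise attachments always induce cut vertices (and so begin fresh blocks) while edgewise attachments never do (and so extend an existing polygonal-$2$-tree block). This is exactly where the $2$-connectivity of polygonal $2$-trees does the real work. A secondary point requiring care is the treatment of the degenerate cases permitted by \cref{def:succulent} --- namely $2$-cycles and, where relevant, single edges --- since a bare edge is explicitly \emph{not} a polygonal $2$-tree; these must either be absorbed into the cycle-building interpretation or dispatched as separate base cases so that the stated equivalence holds without exception.
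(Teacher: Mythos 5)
Your proposal is correct and takes essentially the same approach as the paper, which justifies the theorem by the very observation you center on: vertexwise attachments of cycles create cut vertices and hence begin new blocks, while edgewise attachments are precisely the recursive step in the definition of a polygonal $2$-tree and hence extend the current block. The paper gives only an informal version of this argument in the paragraph preceding the theorem; your two-directional induction, and your attention to the degenerate $2$-cycle and single-edge cases, is simply a more careful rendering of the same idea.
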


A vocabulary and toolset for studying classes of connected graphs and blocks is developed in \cite[\S 4.2]{bll}; in this language, the statement of \cref{thm:succblock} can be rephrased as the species equation
\begin{equation}
  \label{eq:succblock}
  \specsucc = C_{\specpt}.
\end{equation}

By Proposition 2 and Theorem 3 of \cite[\S 4.2]{bll}, we then have the following two additional relationships:
\begin{gather}
  \specsucc = \pointed{\specsucc} + \specpt \pbrac*{\pointed{\specsucc}} - \pointed{\specsucc} \cdot \specpt' \pbrac*{\pointed{\specsucc}} \label{eq:succblockdecomp} \\
  \intertext{where}
  \pointed{\specsucc} = X \cdot E \pbrac*{\specpt' \pbrac*{\pointed{\specsucc}}}. \label{eq:succpoint}
\end{gather}

Since the cycle index $\ci{\specpt}$ is known from \cref{sec:pt}, we can use \cref{eq:succpoint} to compute $\ci{\pointed{\specsucc}}$ recursively, then \cref{eq:succblockdecomp} to compute $\ci{\specsucc}$ from it.

The exact numbers of labeled and unlabeled succulent graphs are given in \cref{tab:succ}.

\appendix
\section{Enumerative tables}
\label{sec:enumtab}
The various functional relationships in \cref{eq:ptorootestar,eq:ptoroote,eq:ptorootp,eq:ptorootpe,eq:dissymalt,eq:succblockdecomp} correspond to recursive systems of equations in the corresponding cycle indices.
Computational techniques can be used to solve for the values of the coefficients.
We have done so here, using code in \cref{sec:code} executed in the Sage computer algebra system \cite{sage}.
We present here the numbers of labeled and unlabeled polygonal $2$-trees with $n \leq 26$ vertices and the numbers of labeled and unlabeled succulents with $n \leq 19$ vertices.

The calculations in \cref{tab:ptunrooted} took approximately 320s on a modern desktop.
The calculations in \cref{tab:succ} took approximately 49s on the same system (but were constrained by memory).

\begin{table}[htp]
  \centering
  \begin{tabular}{r r r}
    \toprule
    $n$ & Labeled & Unlabeled \\
    \midrule
    0 & 0 & 0 \\
    1 & 0 & 0 \\
    2 & 1 & 0 \\
    3 & 1 & 1 \\
    4 & 9 & 2 \\
    5 & 142 & 4 \\
    6 & 3255 & 12 \\
    7 & 98031 & 35 \\
    8 & 3656548 & 146 \\
    9 & 162577332 & 638 \\
    10 & 8389712565 & 3202 \\
    11 & 492731139565 & 16812 \\
    12 & 32442804010386 & 92896 \\
    13 & 2366514029082534 & 526772 \\
    14 & 189407564735080783 & 3059529 \\
    15 & 16501454669316415995 & 18074277 \\
    16 & 1554438720577536961560 & 108363677 \\
    17 & 157423599814757566519336 & 657666274 \\
    18 & 17055697585856128847006697 & 4034258315 \\
    19 & 1968364932798990980350721817 & 24978270864 \\
    20 & 241066057385127358326660352030 & 155936687183 \\
    21 & 31225184482248201727492659433530 & 980693145568 \\
    22 & 4264939764724371509073783537878211 & 6208610766918 \\
    23 & 612621843178318008183525963968742151 & 39541690252881 \\
    24 & 92318664159675081116148301725731288868 & 253208231528625 \\
    25 & 14562874254239454682491677079887534079900 & 1629504665609635 \\
    26 & 2399897780180354666071878804962398006738525 & 10534360792342723 \\
    \bottomrule
  \end{tabular}
  \caption{Number of polygonal $2$-trees with $n$ vertices}
  \label{tab:ptunrooted}
\end{table}

\begin{table}[htp]
  \centering
  \begin{tabular}{r r r}
    \toprule
    $n$ & Labeled & Unlabeled \\
    \midrule
    0 & 0 & 0 \\
    1 & 1 & 1 \\
    2 & 0 & 0 \\
    3 & 1 & 1 \\
    4 & 9 & 2 \\
    5 & 157 & 5 \\
    6 & 3795 & 15 \\
    7 & 119346 & 53 \\
    8 & 4621708 & 227 \\
    9 & 212726529 & 1066 \\
    10 & 11345387805 & 5523 \\
    11 & 687946890790 & 30142 \\
    12 & 46736272993806 & 172227 \\
    13 & 3515975765492235 & 1012974 \\
    14 & 290136704987785747 & 6104629 \\
    15 & 26055571620539221320 & 37471623 \\
    16 & 2529614021758754876520 & 233595886 \\
    17 & 263997116122623681660241 & 1475082907 \\
    18 & 29471762512579341908184345 & 9418713822 \\
    19 & 3504426532914198495232154142 & 60723473472 \\
    \bottomrule
  \end{tabular}
  \caption{Number of succulents with $n$ vertices}
  \label{tab:succ}
\end{table}

\clearpage

\section{Code listing}
\label{sec:code}
The enumerations in \cref{sec:enumtab} were completed using the Sage computer algebra system \cite{sage}.
The code used is given in \cref{list:succcode}.
In particular, the number of unlabeled succulents with up to $n$ vertices may be computed by copying the code into a Sage notebook, modifying the last line with the desired value of $n$, and executing.
The enumeration of other species, and labeled enumerations, may be obtained by modifying the last line appropriately.
(The code is also available in a form that can be executed online through the author's website.)

WARNING: This code depends on patches which have not yet been accepted to Sage main.
(\#14347, \#14543, \#14846)

\lstinputlisting[caption=Sage code to compute numbers of succulents, label=list:succcode, language=Python, texcl=true]{python/p2trees.sage}

\bibliography{sources}
\bibliographystyle{plain}

\end{document}